\documentclass[12pt,oneside,reqno]{amsart}

\usepackage{amssymb}
\usepackage{amsmath}
\usepackage{dsfont}
\usepackage{titlesec}
\titleformat{name=\section}{}{\thetitle.}{0.8em}{\centering\scshape}
\titlespacing*{\section}{0pt}{1.1\baselineskip}{\baselineskip}

\newtheorem{theorem}{\textbf Theorem}
\newtheorem{lemma}[theorem]{\textbf Lemma}

\theoremstyle{definition}
\newtheorem{definition}[theorem]{\textbf Definition}

\theoremstyle{remark}
\newtheorem{question}[theorem]{\textit Question}

\newcommand{\End}{\text{End}}
\newcommand{\BZ}{\mathbb{Z}}

\begin{document}

\title[Iterated Differential Polynomial Rings]{Iterated Differential Polynomial Rings over Locally Nilpotent Rings}

\author[Jin]{Steven Jin}

\address{Department of Mathematics, University of Maryland, College Park MD 20742, USA.}

\email{sjin6816@umd.edu}

\author[Shin]{Jooyoung Shin}

\address{Department of Mathematical Sciences, Kent State University, Kent OH 44242, USA.}

\email{jshin5@kent.edu}

\subjclass[2010]{16N40.}

\keywords{Behrens radical; differential polynomial ring; locally nilpotent ring.}

\begin{abstract}
We study iterated differential polynomial rings over a locally nilpotent ring and show that a large class of such rings are Behrens radical. This extends results of Chebotar and Chen et al.
\end{abstract}

\maketitle

\section{Introduction}

Let $R$ be a ring. An additive map $d: R \to R$ that satisfies Leibniz's rule is called a \textit{derivation} of $R$. For a derivation $d$, the \textit{differential polynomial ring} $R[X;d]$ is given by all polynomials of form $a_nX^n+\dots +a_1X+a_0$ with $n\geq 0$ and $a_0,\dots ,a_n\in R$. Multiplication is given by $Xa=aX+d(a)$ for all $a\in R$ and extending via associativity and linearity. 
\par Recall that a ring is called \textit{Brown-McCoy radical} if it cannot be mapped onto a simple ring with identity. Similarly, a ring is called \textit{Behrens radical} if it cannot be mapped onto a ring with a non-zero idempotent.
\par In 1972, Krempa \cite{Krempa1972} showed that the K\"{o}the conjecture is equivalent to the statement that every polynomial ring over a nil ring is Jacobson radical. The problem remains open, but this equivalent formulation motivated the investigation of parallel questions for more general radical classes. For example, Puczylowski and Smoktunowicz \cite{Puczylowski1998} proved in 1998 that a polynomial ring over a nil ring is Brown-McCoy radical. This result was strengthened in 2001 by Beidar et al. \cite{Beidar2001}, who proved that a polynomial ring over a nil ring is Behrens radical. The corresponding questions for multivariate polynomial rings were open until recently. Then, in 2018, Chebotar et al. \cite{Chebotar-Ke2018} employed techniques from convex geometry to prove that a multivariate polynomial ring over a nil ring is Brown-McCoy radical. It is still unknown whether such a ring need be Behrens radical.
\par After restricting the class of base rings from nil rings to locally nilpotent rings, one can formulate analogous questions for differential polynomial rings. At a 2011 conference in Coimbra, Portugal, Shestakov asked whether a differential polynomial ring over a locally nilpotent ring is necessarily Jacobson radical. This can in some sense be viewed as the analog of the K\"{o}the conjecture for differential polynomial rings. Curiously, this statement turned out to be false; a 2014 result of Smoktunowicz and Ziembowski \cite{Smoktunowicz2014} yields a constructive counterexample. Nonetheless, pursuing a similar line of investigation as in the non-differential case, Greenfeld et al. \cite{GreenfeldToAppear} asked whether a differential polynomial ring over a locally nilpotent must be Behrens radical. This question was promptly resolved in the affirmative by Chebotar \cite{Chebotar2018} in 2018. 
\par Extending the results of Chebotar in two different directions, Chen et al. \cite{Chen2020} proved the following two theorems:

\begin{theorem} \label{theorem-1} \textnormal{[4, Theorem 1]}
Let $d_1,\dots , d_p$ be derivations of a locally nilpotent ring $R$. Let $X_1,\dots ,X_p$ be commuting variables. Then the differential polynomial ring $R[X_1,\dots ,X_p; d_1,\dots ,d_p]$ is Behrens radical. 
\end{theorem}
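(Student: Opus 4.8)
\emph{Proof proposal.}
Since comparing $X_iX_ja$ with $X_jX_ia$ forces $d_id_j=d_jd_i$, we are dealing with a commuting family of derivations, and by the definition of the Behrens radical it suffices to prove: for every ideal $I$ of $T:=R[X_1,\dots,X_p;d_1,\dots,d_p]$ and every $e\in T$ with $e^2-e\in I$, in fact $e\in I$. Two reductions present themselves. First, $I\cap R$ is an ideal of $R$ invariant under each $d_i$ (if $r\in I\cap R$ then $d_i(r)=X_ir-rX_i\in I$), and $T/I$ is a homomorphic image of $(R/(I\cap R))[X_1,\dots,X_p;\bar d_1,\dots,\bar d_p]$, so we may assume $R$ embeds into $T/I$. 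Second, and this is where things become concrete, if $R$ is \emph{nilpotent}, say $R^n=0$, then $T$ is nilpotent of index $n$: when a product of $n$ elements of $T$ is rewritten in normal form $\sum r_\alpha X^\alpha$, each coefficient is a sum of products of $n$ elements of $R$, because the rewriting rules $X_ir=rX_i+d_i(r)$ only ever replace a coefficient by one of its (iterated) $d_i$-derivatives, which again lies in $R$. Hence a differential polynomial ring in finitely many commuting variables over a nilpotent ring is nilpotent, in particular Behrens radical; the whole content of the theorem is the step from nilpotent to merely locally nilpotent $R$.

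The plan is to build $T$ out of nilpotent pieces. Suppose one can exhibit an ascending, possibly transfinite, chain $0=N_0\subseteq N_1\subseteq\cdots$ of $d_i$-invariant ideals of $R$ with union $R$ and with every quotient $N_{\lambda+1}/N_\lambda$ nilpotent. Put $T_\lambda:=N_\lambda[X_1,\dots,X_p;d_1,\dots,d_p]$. Because $N_\lambda$ is a $d_i$-invariant ideal of $R$, each $T_\lambda$ is an ideal of $T$; the $T_\lambda$ exhaust $T$; and $T_{\lambda+1}/T_\lambda\cong(N_{\lambda+1}/N_\lambda)[X_1,\dots,X_p;\bar d_1,\dots,\bar d_p]$ is nilpotent by the paragraph above, hence Behrens radical. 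Now use two soft properties of the Behrens radical: it is closed under extensions, like every Kurosh--Amitsur radical; and it is closed under directed unions of subrings. The latter holds because a nonzero idempotent in a quotient $T/I$ is witnessed by a single element $e$, which lies in some $T_\lambda$, and then $e\notin I$ while $e^2-e\in I\cap T_\lambda$, so $T_\lambda/(I\cap T_\lambda)$ has a nonzero idempotent. A transfinite induction --- successor steps by the extension property, limit steps by the union property --- shows each $T_\lambda$, and hence $T=\bigcup_\lambda T_\lambda$, is Behrens radical.

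This concentrates the difficulty in a single statement about the coefficient ring: \emph{a locally nilpotent ring carrying a commuting family of derivations admits an ascending chain of $d_i$-invariant ideals, exhausting it, with nilpotent successive quotients}; this is the step I expect to be the main obstacle, and it is where local nilpotence must genuinely be used. The naive attempts fail: closing a finitely generated (hence nilpotent) subring of $R$ under the $d_i$ need not produce a nilpotent ring --- indeed, by the Smoktunowicz--Ziembowski construction it cannot always, for otherwise $R[X;d]$ would be nil rather than merely Behrens radical --- and the $d_i$-invariant upper annihilator series $0\subseteq\operatorname{Ann}_1(R)\subseteq\operatorname{Ann}_2(R)\subseteq\cdots$, though it has square-zero quotients, need not reach $R$ (its stable term only reduces one to the case $\operatorname{Ann}_1(R)=0$). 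It may be that no such clean filtration exists and one must argue directly, as Chebotar does for $p=1$. One fixes an admissible order on the monomials $X^\alpha$, so that in $T$ the leading term of a product equals the product of the leading terms whenever their coefficients have nonzero product in $R$. Among all representatives of a hypothetical nonzero idempotent of $T/I$ one then chooses one whose leading monomial is minimal, and uses the nilpotence of its leading coefficient, together with the idempotent relation, to contradict that minimality. Carrying this bookkeeping through for $p$ commuting variables is what Theorem~\ref{theorem-1} demands.
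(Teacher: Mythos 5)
There is a genuine gap: everything you correctly establish is preparatory, and the single step you yourself identify as ``the main obstacle'' is both missing and, in the form you propose it, unattainable. Your reductions are fine --- a differential polynomial ring in finitely many commuting variables over a nilpotent ring is indeed nilpotent, the Behrens radical is a Kurosh--Amitsur radical closed under extensions and directed unions of ideals, and passing to $R/(I\cap R)$ is legitimate. But the filtration you would need --- an exhaustive ascending chain of $d_i$-invariant ideals of $R$ with nilpotent successive quotients --- cannot exist in general: if it did, your own transfinite induction would place $T$ in the lower radical generated by the nilpotent rings, i.e.\ in the prime radical, hence in the Jacobson radical; the Smoktunowicz--Ziembowski example already refutes this for $p=1$. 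So the plan of your second paragraph is not merely difficult, it is provably a dead end, and your closing sentence defers the entire content of Theorem~\ref{theorem-1} to an unexecuted ``direct argument.''

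Moreover, the direct argument you gesture at (an admissible monomial order, a representative of the idempotent with minimal leading monomial, a contradiction from the nilpotence of the leading coefficient) is not how Chebotar's method, or this paper's, actually works, and it is not clear it can be made to work: leading coefficients of products can vanish in a locally nilpotent ring, so ``the leading term of a product equals the product of the leading terms'' fails exactly where you need it. The argument the paper uses (for the general Theorem~\ref{theorem-4}, of which Theorem~\ref{theorem-1} is the special case where each $d_i$ acts trivially off $R$) is structural rather than combinatorial on monomials: assuming $T$ is not Behrens radical, one maps onto a subdirectly irreducible prime ring $A$ with a nonzero idempotent $e$ in its heart, realizes the variables as elements $x_i$ of the Martindale right quotient ring $Q$ over the extended centroid $K$, embeds the finitely generated $K$-algebra they generate with the coefficients of $e$ into $\End_K(V)$, and then uses iterated-commutator identities (Lemmas 6--13) together with a flag $0=V_0\subseteq\cdots\subseteq V_h=V$ annihilated stepwise by a nilpotent subalgebra $S$ to show $e[e,\bar x]_{k_1,\dots,k_n}(V)=0$ and hence $e=0$. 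Local nilpotence of $R$ enters only through the nilpotence of that finitely generated subalgebra $S$, not through a filtration of $R$ or a minimality argument on leading terms. To complete your proof you would need to supply an argument of this kind; none of its ingredients appear in the proposal.
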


\begin{theorem}  \label{theorem-2}
\textnormal{[4, Theorem 2]} Let $\delta$ be a derivation of a locally nilpotent ring $R$ and let $d$ be a derivation of $R[X;\delta]$ such that:
\par (i) $d(R)\subseteq R$, 
\par (ii) $d|_R$ is locally nilpotent, and
\par (iii) $d^n(aX)-Xd^n(a)\in R$ for all $a\in R$ and positive integers $n$.
\par Then $R[X;\delta][Y;d]$ is Behrens radical. 

\end{theorem}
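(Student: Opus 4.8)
The plan is to follow the approach of Chebotar and of Chen et al.: reduce the assertion to a statement about a single element and a single ideal, settle the ``constant in $Y$'' case using Theorem~\ref{theorem-1}, and then induct on the degree in $Y$, with hypotheses (i)--(iii) keeping the derivation $d$ under control. Recall that a ring $A$ is Behrens radical precisely when, for every two-sided ideal $I$ of $A$ and every $f \in A$ with $f - f^{2} \in I$, one has $f \in I$ (a nonzero idempotent of a homomorphic image of $A$ is exactly a class $f+I$ with $f-f^{2}\in I$ and $f \notin I$). So fix an ideal $I$ of $S := R[X;\delta][Y;d]$ and $f \in S$ with $f - f^{2} \in I$; the goal is $f \in I$. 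Since every element of the coset $f+I$ is again idempotent modulo $I$, I may take $f = \sum_{j=0}^{n} c_{j} Y^{j}$ of least degree $n$ in $Y$ within its coset, where $c_{j} \in T := R[X;\delta]$; if $f \notin I$, then $c_{n} \neq 0$. When $n = 0$ we get $c_{0} - c_{0}^{2} \in I \cap T$, which is an ideal of $T$; since $T = R[X;\delta]$ is Behrens radical by Theorem~\ref{theorem-1} (with $p = 1$), the quotient $T/(I \cap T)$ has no nonzero idempotent, forcing $c_{0} \in I \cap T \subseteq I$. So assume $n \geq 1$ and aim for a contradiction.

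For the inductive step I would study the ``leading-coefficient ideals'' $\Lambda_{k} = \{ c \in T : \ cY^{k} + (\text{terms of lower degree in } Y) \in I \} \cup \{0\}$. Each $\Lambda_{k}$ is an ideal of $T$; it is stable under $d$, since for $p = cY^{k} + \cdots \in I$ one has $[Y,p] = Yp - pY \in I$ with $Y^{k}$-coefficient $d(c)$; and $\Lambda_{k} \subseteq \Lambda_{k+1}$. Minimality of $n$ forces $c_{n} \notin \Lambda_{n}$, while $f - f^{2} \in I$ pushes information about $c_{n}$ into the $\Lambda_{k}$'s: the coefficient of $Y^{2n}$ in $f^{2} - f$ is $c_{n}^{2}$, so $c_{n}^{2} \in \Lambda_{2n}$, and multiplying $f - f^{2}$ on either side by elements of $T$ and by powers of $Y$ yields further relations of this kind. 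The crux is to combine these with the local nilpotence of $R$ — the finitely many coefficients appearing in the $c_{j}$ together generate a nilpotent subring of $R$ — and with the tameness of $d$ provided by (i)--(iii), so as to produce an element of $I$ congruent to $f$ but of strictly smaller $Y$-degree, contradicting minimality. Here (i) keeps $d$ from raising $X$-degree on $R$; (ii) makes every $d$-orbit $\{ d^{k}(a) : k \geq 0 \} \subseteq R$ finite; and (iii) — which for exponent $1$ already yields $a\,d(X) \in R$ for every $a \in R$, and, by the Leibniz rule together with $Xa = aX + \delta(a)$, propagates to control of $d^{k}(aX^{i})$ for all $i$ — is what lets the relation $YX = XY + d(X)$ be absorbed without spoiling the estimates.

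I expect the inductive step to be the main obstacle. In contrast with Chebotar's one-variable theorem, the coefficients $c_{j}$ now lie in the non-nil ring $T = R[X;\delta]$, so $c_{n}$ itself need not be nilpotent and one cannot simply iterate $f \mapsto f^{2}$; the degree reduction has to be carried out at the level of the coefficient ideals $\Lambda_{k}$, exploiting their two-sided $T$-module structure and $d$-stability in tandem with nilpotence of the individual coefficients inside $R$. Making hypothesis (iii) do its work also needs care: one must first show that it propagates from the degree-one terms $aX$ to all of $R[X;\delta]$, and one must prevent iterated applications of $\delta$ to the coefficients of $d(X)$ from proliferating (recall $\delta$ is not assumed locally nilpotent) — here the annihilator consequence of (iii), namely that the part of $d(X)$ of positive degree in $X$ is left-annihilated by $R$, is the key tool. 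Should this direct route prove too delicate, an alternative worth attempting is to relate $S$ to a suitable commuting-variable differential polynomial ring of the kind covered by Theorem~\ref{theorem-1}, controlling the ``defect'' $YX - XY = d(X)$ and invoking closure of the Behrens radical class under extensions.
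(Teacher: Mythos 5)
There is a genuine gap: what you have written is a framework plus an accurate diagnosis of the difficulty, not a proof. The base case $n=0$ and the reduction to ``$f-f^2\in I$ implies $f\in I$'' are fine, but the entire content of the theorem sits in the inductive step, which you explicitly defer (``I expect the inductive step to be the main obstacle'') and never carry out. The obstruction you name --- that the leading coefficient $c_n$ lies in $T=R[X;\delta]$, which is not nil, so no power of $f$ kills the top term --- is precisely why the degree-reduction strategy of Beidar--Fong--Puczylowski does not transfer, and you offer no mechanism to get around it. Hypotheses (i)--(iii) are invoked only as slogans (``keeps $d$ from raising $X$-degree,'' ``lets $YX=XY+d(X)$ be absorbed''); none of them is converted into an actual degree-lowering identity. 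There is also a technical error in the setup of the coefficient ideals $\Lambda_k$: you argue $d$-stability from $[Y,p]=Yp-pY\in I$, but $R$ is locally nilpotent, hence $T$ has no identity and $Y\notin S$, so neither $Yp$ nor $pY$ need lie in the ideal $I$. (This is exactly the point where the paper has to adjoin an identity, pass to the ring $R'$ generated by $R[\bar X_n,\bar d_n]$ and the powers $X_i^j$, and use primeness of the image to show the left-multiplication operators $x_i$ are well defined.)

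The paper's route (for Theorem 2 via the more general Theorem 4, following Chebotar [2] and Chen et al.\ [4]) is structural rather than inductive on degree: assume a surjection onto a subdirectly irreducible ring $A$ with a nonzero idempotent in its heart; $A$ is prime with extended centroid a field $K$; realize the variables as elements $x_i$ of the Martindale right quotient ring; write the idempotent as $e=\sum \bar x^{i_1,\dots,i_n}a_{i_1,\dots,i_n}$ with the $a$'s in the image of the locally nilpotent ring $R$; embed a finitely generated subalgebra into $\End_K(V)$; and then use the commutator identities of Lemmas 6--12 together with a filtration $0=V_0\subseteq\cdots\subseteq V_h=V$ annihilated step by step by the nilpotent subalgebra (Lemma 13) to force $e=0$. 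If you want to complete a proof, that is the machinery you need; the coefficient-ideal induction as you have set it up does not close.
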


We remark that the proof of the latter theorem relies heavily on the assumption that $d|_R$ is locally nilpotent. 

\par We wish to expand upon this line of investigation. First we establish a definition.

\begin{definition} \label{definition-3}

Let $R$ be a ring. For all $1\leq i\leq n$, suppose that $d_i$ is a derivation of $R[X_1;d_1]\dots [X_{i-1};d_{i-1}]$. We denote $R[X_1;d_1]\dots [X_n;d_n]$ as $R[\bar{X}_n, \bar{d}_n]$. We call such a ring an \textit{iterated differential polynomial ring} over $R$.

\end{definition}

\par If $R$ is a ring without identity, let $R^*$ denote the ring given by adjoining an identity element to $R$. 

\par In this paper, we will prove the following result:

\begin{theorem} \label{theorem-4}

Suppose $R[\bar{X}_n, \bar{d}_n]$ is an iterated differential polynomial ring over a locally nilpotent ring $R$. Suppose that for all $i$ each $d_i$ can be extended to a derivation on $R^*[X_1;d_1]\dots [X_{i-1};d_{i-1}]$ such that $d_i$ restricts to a derivation on $R$ and further $d_i(X_j)\in R^*$ for all $0<j<i$. Then $R[\bar{X}_n, \bar{d}_n]$ is Behrens radical. 

\end{theorem}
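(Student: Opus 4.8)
\emph{Proof sketch.} The plan is to argue by contradiction, adapting the minimal--multidegree arguments of Chebotar~\cite{Chebotar2018} and Chen et al.~\cite{Chen2020} to the iterated, genuinely non-commutative situation; the hypotheses on the $d_i$ are exactly what keep a leading--term argument valid. Suppose $R[\bar{X}_n,\bar{d}_n]$ is not Behrens radical. Then it maps onto a nonzero ring with a nonzero idempotent, and passing to a minimal prime over the kernel of that map --- and using that the prime radical is nil, so the idempotent survives --- we obtain a surjection $\phi\colon R[\bar{X}_n,\bar{d}_n]\to S$ onto a prime ring $S$ containing a nonzero idempotent $e$. We aim to contradict this.

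First I would dispose of the structural consequences of the hypotheses. Each $d_i$ restricts to a derivation of $R$; writing an element of $R[\bar{X}_{i-1},\bar{d}_{i-1}]$ in normal form with left coefficients in $R$ and using $d_i(X_j)\in R^*$ together with $R\triangleleft R^*$, one checks that $d_i$ maps $R[\bar{X}_{i-1},\bar{d}_{i-1}]$ into itself, so that $R[\bar{X}_n,\bar{d}_n]$ is an ideal of $R^*[\bar{X}_n,\bar{d}_n]$. Hence the rewriting rules $X_ia=aX_i+d_i(a)$ and $X_iX_j=X_jX_i+d_i(X_j)$ never carry a polynomial with coefficients in $R$ outside $R[\bar{X}_n,\bar{d}_n]$; each application of them keeps all coefficients in $R$ and strictly lowers the degree of the term moved; and pushing a fixed $c\in R$ past a monomial of total degree at most $t$ produces only $d_i$-iterated derivatives of $c$ of order at most $t$. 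This bounded--order feature of the derivatives, and the fact that reordering the $X_i$'s costs only strictly lower monomials with coefficients in $R$, are precisely what $d_i(R)\subseteq R$ and $d_i(X_j)\in R^*$ buy us.

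Now fix a monomial well--order on $\BZ_{\ge 0}^n$ refining total degree, and among all data $(S,\phi,e,f)$ as above with $\phi(f)=e$, choose one with $f$ of least multidegree. Only finitely many coefficients, all in $R$, occur in $f$, and since the total degree can never increase under the reductions below, only iterated $d_i$-derivatives of these coefficients up to a fixed order will ever be needed; these finitely many elements generate a finitely generated, hence nilpotent, subring $A$ of $R$, say $A^m=0$. If $f$ has multidegree $0$ then $f\in R$, so $e=\phi(f)$ lies in the locally nilpotent ring $\phi(R)$ and is therefore nilpotent, which is impossible for a nonzero idempotent. Otherwise let $r^{\star}$ be the leading coefficient of $f$; then $r^{\star}\in A$, so multiplying $f$ on the right by $(r^{\star})^{m-1}$ makes the coefficient of the top monomial equal to $(r^{\star})^m=0$, and after rewriting --- which is legitimate and strictly lowers the multidegree by the previous paragraph --- we obtain $g\in R[\bar{X}_n,\bar{d}_n]$ of strictly smaller multidegree. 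The heart of the argument is to arrange that $\phi(g)$, possibly after replacing $S$ by a further prime homomorphic image and adjusting, is again a nonzero idempotent; this contradicts minimality and finishes the proof.

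The main obstacle is exactly this last point. Naively, $\phi(g)=e\,\phi(r^{\star})^{m-1}$ need not be idempotent --- it tends to be nilpotent --- since $e$ is not an identity of $S$; this is the same difficulty Beidar et al.~\cite{Beidar2001} and Chebotar overcame in the earlier cases, and I would handle it the same way: exploit the primeness of $S$ so that the relevant one-sided multiples of $e$ remain nonzero, and apply an idempotent--smoothing step to recover a genuine nonzero idempotent --- of necessarily smaller multidegree --- in a further prime homomorphic image. The secondary, genuinely new, difficulty is the non-commutative bookkeeping: one must verify that over the whole chain of reductions the orders of the iterated derivatives that arise stay bounded (by the original total degree times the number of multidegrees below it), so that the single nilpotent subring $A$ really does control every step with no circularity between its nilpotence index and the number of factors used, and that reordering the $X_i$'s never produces a term of equal or larger multidegree. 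Both points rest on $d_i(R)\subseteq R$ and $d_i(X_j)\in R^*$; the case $n=1$ is Chebotar's theorem, and it is the template for the reduction step.
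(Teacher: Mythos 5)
There is a genuine gap at the heart of your descent, and it is the step you yourself flag as ``the main obstacle.'' After right-multiplying $f$ by $(r^{\star})^{m-1}$ to kill the leading coefficient, you obtain $g$ of smaller multidegree with $\phi(g)=e\,\phi(r^{\star})^{m-1}$, and you then assert that ``an idempotent--smoothing step'' recovers a nonzero idempotent of smaller multidegree in a further prime image, citing Beidar et al.\ and Chebotar as precedent. But those papers contain no such device, because they do not argue by minimal-degree descent on idempotents at all: they (and the present paper) work inside the Martindale right ring of quotients of the prime image. The element $e\,\phi(r^{\star})^{m-1}$ is just an arbitrary right multiple of $e$; it satisfies no approximate idempotency, and primeness only guarantees that \emph{some} two-sided multiple of it is nonzero, not that any idempotent can be extracted. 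Worse, even granting that an idempotent could be manufactured from $\phi(g)$ --- say as a polynomial in $\phi(g)$, or via some $u\phi(g)v$ --- its preimage would be a polynomial in $g$ and other elements, whose multidegree is in general \emph{larger} than that of $f$, not smaller; so the claim ``of necessarily smaller multidegree'' has no justification and the induction does not close. Since the entire proof is this descent, the gap is fatal as written.

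For comparison, the paper's actual argument avoids degree descent entirely. One maps onto a prime ring $A$ with nonzero idempotent $e$, realizes each variable $X_i$ as a left-multiplication operator $x_i$ in the Martindale right ring of quotients (this is where primeness is used, to show $x_i$ is well defined), writes $e=\sum \bar{x}^{i_1,\dots,i_n}a_{i_1,\dots,i_n}$, and embeds the relevant finitely generated subalgebra into $\End_K(V)$ over the extended centroid $K$. The hypotheses $d_i(R)\subseteq R$ and $d_i(X_j)\in R^*$ enter not through a leading-term bookkeeping but through the commutator calculus of Lemmas 6--12, which shows that all the elements $[a_{i_1,\dots,i_n},x_j]_i$ and their iterates land in a single locally nilpotent subalgebra $N$. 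A finitely generated (hence nilpotent) subalgebra $S\subseteq N$ then annihilates a flag $0=V_0\subseteq\cdots\subseteq V_h=V$ step by step, and an induction on the flag (Lemma 13, using $e^2=e$ via Lemmas 9--11) forces $e=0$, a contradiction. If you want to salvage your approach you would need to supply the missing idempotent-recovery mechanism, and I do not see one; the operator-theoretic route is the one that works.
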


We remark that one may view Theorem 4 as a unification of the results in Theorems 1 and 2. If $n=1$, we recover Chebotar's original theorem [2, Theorem 1]. If $n$ is arbitrary and the derivations $d_i$ are taken to be trivial off of $R$, we recover Theorem 1. If we set $n=2$, we retrieve a strengthened version of Theorem 2; namely, hypothesis (ii) has been removed and hypothesis (iii) has been weakened. In particular, the key ingredients used in the proof of [4, Theorem 2] are shown to be unnecessary. 

\par The results of this paper notwithstanding, there arises naturally the following question:

\begin{question}
Let $R$ be a locally nilpotent ring and $R[\bar{X}_n, \bar{d}_n]$ an iterated differential polynomial ring. Is $R[\bar{X}_n, \bar{d}_n]$ Behrens radical?
\end{question}

\section{Results}

We first set notation. For elements $a$ and $b$ of a ring $R$, we define $[a,b]_0=a$, $[a,b]_1=[a,b]=ab-ba$, and $[a,b]_k=[[a,b]_{k-1},b]$ for $k>1$. Given elements $b_1,\dots ,b_p\in R$ and non-negative integers $k_1,\dots ,k_p$, we denote by $[a,\bar{b}]_{k_1,\dots ,k_p}$ the expression $[\dots [a,b_1]_{k_1},\dots ,b_p]_{k_p}$ and denote by $\bar{b}^{k_1,\dots ,k_p}$ the expression $b_{1}^{k_1}\dots b_{p}^{k_p}$.

\par Additionally, suppose that $c_{i_1',\dots ,i_r'}\in R$ for $0\leq i_{q}'\leq i_q$ where $1\leq q\leq r$ and the $i_q$ are non-negative integers. Then, we write $$\sum_{i_1',\dots ,i_r'=0}^{i_1,\dots ,i_r}c_{i_1',\dots ,i_r'}:=\sum_{i_1'=0}^{i_1}\dots \sum_{i_r'=0}^{i_r}c_{i_1',\dots ,i_r'}.$$ Alternatively, if $i_1=\dots =i_r=s$, then we write $$\sum_{i_1',\dots ,i_r'}^s c_{i_1',\dots ,i_r'}:=\sum_{i_1',\dots ,i_r'=0}^{s,\dots ,s}c_{i_1',\dots ,i_r'}=\sum_{i_1'=0}^s \dots \sum_{i_r'=0}^s c_{i_1',\dots ,i_r'}.$$

We will now establish some preliminary lemmas. Our first lemma is an easy consequence of the Leibniz rule:

\begin{lemma}
Let $a,b,c$ be elements of a ring $R$. For any non-negative integer $k$, we have $$[ab,c]_k=\sum_{i=0}^k D_i[a,c]_i[b,c]_{k-i}$$ for some $D_i\in \BZ$. \hfill \qed
\end{lemma}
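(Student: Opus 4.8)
The plan is to recognize the iterated bracket $[\,\cdot\,,c]_k$ as the $k$-th iterate of an inner derivation of $R$ and then to invoke the higher Leibniz rule.

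First I would observe that the map $\delta\colon R\to R$ given by $\delta(x)=[x,c]=xc-cx$ is a derivation. A one-line expansion confirms this: $\delta(xy)=xyc-cxy=x(yc-cy)+(xc-cx)y=x\,\delta(y)+\delta(x)\,y$. With this notation in hand, the recursion $[x,c]_0=x$, $[x,c]_k=[[x,c]_{k-1},c]$ says precisely that $[x,c]_k=\delta^k(x)$ for every non-negative integer $k$; in particular $[ab,c]_k=\delta^k(ab)$.

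Next I would prove the generalized Leibniz rule $\delta^k(ab)=\sum_{i=0}^k\binom{k}{i}\,\delta^i(a)\,\delta^{k-i}(b)$ by induction on $k$. The base case $k=0$ is trivial, and $k=1$ is the derivation identity just verified. For the inductive step I would apply $\delta$ to the expression for $\delta^k(ab)$, use additivity of $\delta$ together with the derivation property on each summand $\delta^i(a)\,\delta^{k-i}(b)$, and then regroup, absorbing the two contributions to $\delta^i(a)\,\delta^{k+1-i}(b)$ via Pascal's identity $\binom{k}{i}+\binom{k}{i-1}=\binom{k+1}{i}$. Setting $D_i=\binom{k}{i}\in\BZ$ then gives the asserted formula.

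There is no real obstacle here; the computation is routine. The only points deserving a moment's care are the sign convention---it is $x\mapsto xc-cx$, and not $x\mapsto cx-xc$, that satisfies the (left--right ordered) Leibniz rule as written---and the identification of the recursively defined $[\,\cdot\,,c]_k$ with the $k$-fold composite $\delta^k$. One could equally well bypass the derivation language and induct on $k$ directly in $[ab,c]_k=[[ab,c]_{k-1},c]$, splitting each step with the $k=1$ case; this is the same calculation in different dress.
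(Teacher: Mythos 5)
Your proof is correct and matches the paper's intent exactly: the paper gives no written proof, remarking only that the lemma is ``an easy consequence of the Leibniz rule,'' which is precisely the higher Leibniz rule for the inner derivation $\delta(x)=[x,c]$ that you establish, and your identification $D_i=\binom{k}{i}$ is a valid (indeed sharper) instance of the claimed integer coefficients.
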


Other useful results include the following:

\begin{lemma}
For elements $a$ and $b$ in a ring $R$ and non-negative integers $r$ and $s$, we have 

\begin{align*}
    [a^r,b]_s=\sum_{w_1,\dots ,w_r = 0}^s E_{w_1,\dots ,w_r} [a,b]_{w_1}\dots [a,b]_{w_r}
\end{align*}
for some $E_{w_1,\dots ,w_r}\in \BZ$.
\end{lemma}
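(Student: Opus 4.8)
The plan is to proceed by induction on $r$, peeling off one factor of $a$ at a time by means of the preceding lemma on $[ab,c]_{k}$. The base case $r=1$ is trivial, since $[a^{1},b]_{s}=[a,b]_{s}$: one takes $E_{s}=1$ and $E_{w_{1}}=0$ for $0\le w_{1}<s$.

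For the inductive step I would assume the identity for the exponent $r$, phrasing the induction hypothesis uniformly in the last index: for every non-negative integer $t$,
$$[a^{r},b]_{t}=\sum_{w_{1},\dots ,w_{r}=0}^{t}E^{(t)}_{w_{1},\dots ,w_{r}}\,[a,b]_{w_{1}}\dots [a,b]_{w_{r}}$$
for suitable integers $E^{(t)}_{w_{1},\dots ,w_{r}}$. Writing $a^{r+1}=a\cdot a^{r}$ and applying the preceding lemma to the product $a\cdot a^{r}$ with third entry $b$ and index $s$, one obtains
$$[a^{r+1},b]_{s}=\sum_{i=0}^{s}D_{i}\,[a,b]_{i}\,[a^{r},b]_{s-i}$$
with $D_{i}\in\BZ$. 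Now substitute the induction hypothesis for each $[a^{r},b]_{s-i}$ (taking $t=s-i$), renaming its summation indices $w_{2},\dots ,w_{r+1}$ and setting $w_{1}:=i$. Collecting terms yields an expression of exactly the desired shape, with integer coefficients $E_{w_{1},\dots ,w_{r+1}}=D_{w_{1}}\,E^{(s-w_{1})}_{w_{2},\dots ,w_{r+1}}$, where $E^{(s-w_{1})}_{w_{2},\dots ,w_{r+1}}$ is understood to be $0$ whenever some $w_{j}$ exceeds $s-w_{1}$.

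The only point requiring a moment's care is the range of the indices: in the resulting sum one has $w_{1}=i\le s$ and $w_{2},\dots ,w_{r+1}\le s-i\le s$, so every index lies in $\{0,1,\dots ,s\}$, and tuples that do not arise are simply assigned coefficient $0$; integrality is immediate since we only ever form sums and products of integers. Thus there is no genuine obstacle here — the statement is a bookkeeping induction whose engine is the previous lemma — and its purpose is evidently to later let one control $[a^{r},b]_{s}$ entirely in terms of the single-bracket quantities $[a,b]_{w}$.
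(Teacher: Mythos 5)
Your proof is correct and follows essentially the same route as the paper: induction on $r$ with the preceding Leibniz-type lemma as the engine, followed by re-indexing with zero coefficients to pad the ranges. The only cosmetic difference is that you split $a^{r+1}=a\cdot a^{r}$ (putting the single bracket factor on the left) while the paper uses $a^{r+1}=a^{r}\cdot a$; both are harmless since the statement sums over all index tuples.
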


\begin{proof}
The cases $r=0$ and $r=1$ are trivial. The first nontrivial case is Lemma 6. We induct on $r$. By applying Lemma 6, we can see that
\begin{align*}
[a^{r+1},b]_s=\sum_{i=0}^s D_i [a^r,b]_i [a,b]_{s-i}. 
\end{align*}
for some $D_i\in \mathbb{Z}$.
Now we may apply the inductive hypothesis:
\begin{align*}
\sum_{i=0}^s D_i [a^r,b]_i [a,b]_{s-i} &= \sum_{i=0}^s  \sum_{w_1,\dots ,w_r=0}^{i}D_i  E_{w_1,\dots ,w_r}[a,b]_{w_1}\dots [a,b]_{w_r}[a,b]_{s-i}\\
&=\sum_{w_1,\dots ,w_{r+1} = 0}^s E_{w_1,\dots ,w_{r+1}} [a,b]_{w_1}\dots [a,b]_{w_{r+1}}.
\end{align*}
for some $D_i$, $E_{w_1,\dots ,w_r}$, $E_{w_1,\dots ,w_{r+1}} \in \mathbb{Z}$.
\end{proof}

\begin{lemma}
For elements $a_{1},\dots ,a_{n}$ and $b$ in a ring $R$ and non-negative integers $i_1,\dots ,i_n$ and $s$, we have 

\begin{align*}
[\bar{a}^{i_1,\dots ,i_n},b]_s=\sum_{w_{1}^{(1)},\dots ,w_{i_1}^{(1)} = 0}^s \dots \sum_{w_{1}^{(n)},\dots ,w_{i_n}^{(n)}=0}^s E_{w_{1}^{(1)},\dots ,w_{i_1}^{(1)}}^{(1)}\dots E_{w_{1}^{(n)},\dots ,w_{i_n}^{(n)}}^{(n)}\\ [a_1,b]_{w_{1}^{(1)}}\dots [a_1,b]_{w_{i_1}^{(1)}}\dots [a_n,b]_{w_{1}^{(n)}}\dots [a_n,b]_{w_{i_n}^{(n)}}
\end{align*}

for $E_{w_{1}^{(j)},\dots w_{i_j}^{(j)}}^{(j)}\in \BZ$ for $1\leq j\leq n$.
\end{lemma}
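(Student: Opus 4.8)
The plan is to prove this by induction on $n$, with the base case $n = 1$ being precisely Lemma 7. The statement is really just a bookkeeping extension of Lemma 7: whereas Lemma 7 expands $[a^r, b]_s$ as an integer combination of products of $r$ "atoms" of the form $[a,b]_{w}$ with each $w \le s$, Lemma 8 asserts that the same holds for a product $a_1^{i_1} \cdots a_n^{i_n}$ of powers of distinct elements, with the atoms now of the form $[a_j, b]_w$ and the number of atoms contributed by the $j$-th block being exactly $i_j$.

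First I would treat the base case: for $n = 1$ the claimed identity is
\[
[a_1^{i_1}, b]_s = \sum_{w_1^{(1)}, \dots, w_{i_1}^{(1)} = 0}^{s} E^{(1)}_{w_1^{(1)}, \dots, w_{i_1}^{(1)}} [a_1,b]_{w_1^{(1)}} \cdots [a_1,b]_{w_{i_1}^{(1)}},
\]
which is exactly the conclusion of Lemma 7 (with $r = i_1$). For the inductive step, suppose the formula holds for $n$ factors and consider $\bar{a}^{i_1,\dots,i_{n+1}} = \bar{a}^{i_1,\dots,i_n} \cdot a_{n+1}^{i_{n+1}}$. Writing $c = \bar{a}^{i_1,\dots,i_n}$ and $c' = a_{n+1}^{i_{n+1}}$, apply Lemma 6 to get $[cc', b]_s = \sum_{i=0}^{s} D_i [c,b]_i [c',b]_{s-i}$ for some $D_i \in \BZ$. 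Now expand $[c,b]_i = [\bar{a}^{i_1,\dots,i_n}, b]_i$ by the inductive hypothesis (with $s$ replaced by $i \le s$, so all resulting subscripts are still $\le s$) and expand $[c',b]_{s-i} = [a_{n+1}^{i_{n+1}}, b]_{s-i}$ by Lemma 7 (again all subscripts $\le s$). Substituting both expansions into the sum and collecting the integer coefficients $D_i$, the $E^{(j)}$'s from the inductive hypothesis, and the $E^{(n+1)}$'s from Lemma 7 into new integers $E^{(j)}_{w_1^{(j)},\dots,w_{i_j}^{(j)}}$ for $1 \le j \le n+1$, and re-indexing each summation range up to $s$ (padding with zero coefficients where a term did not originally appear), yields exactly the desired expression with $n+1$ blocks. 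This completes the induction.

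The only mildly delicate point — and the step I expect to require the most care in writing — is the re-indexing at the end: after the substitution the subscripts $w^{(n)}_\bullet$ coming from the inductive hypothesis range up to $i$ rather than up to $s$, and the product $[c,b]_i [c',b]_{s-i}$ appears inside a sum over $i$, so one must argue that collecting over $i$ and absorbing the dependence on $i$ into the integer coefficients legitimately produces a single family of coefficients $E^{(j)}$ indexed by tuples with all entries in $\{0,\dots,s\}$. This is routine — it is the same maneuver already used in the proof of Lemma 7, where $\sum_{i=0}^s \sum_{w_1,\dots,w_r = 0}^{i}$ was rewritten as $\sum_{w_1,\dots,w_{r+1}=0}^{s}$ — but it is the place where sloppiness could hide an error, so I would state explicitly that extending a summation range while setting the new coefficients to zero outside the original range does not change the value of the sum.
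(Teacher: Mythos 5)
Your proof is correct and follows essentially the same route as the paper's: induction on $n$ with Lemma 7 as the base case, applying Lemma 6 to split off the factor $a_{n+1}^{i_{n+1}}$, then invoking the inductive hypothesis and Lemma 7 on the two resulting commutator factors. The extra care you take with the re-indexing and zero-padding of coefficients is a detail the paper leaves implicit, but it is the same maneuver.
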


\begin{proof}
Induct on $n$. The base step is Lemma 7. Applying Lemma 6, observe that 

\begin{align*}
[\bar{a}^{i_1,\dots ,i_{n+1}},b]_s =    \sum_{j=0}^s D_j [\bar{a}^{i_1,\dots, i_n} ,b]_j[a_{n+1}^{i_{n+1}},b]_{s-j}.
\end{align*}
Then, applying the inductive hypothesis to $[\bar{a}^{i_1,\dots ,i_n},b]_j$ and the basis step to $[a_{n+1}^{i_{n+1}},b]_{s-j}$, we are done. 
\end{proof}

We will also take advantage of [4, Lemmas 4 and 5]. We recite these here for completeness.

\begin{lemma}
\textnormal{[4, Lemma 4]} Let $e,x_1,\dots ,x_p$ be elements of a ring $R$ and $n_1,\dots ,n_p$ be non-negative integers. Then $$e\bar{x}^{n_1,\dots,n_p}=\sum_{i_1=0}^{n_1}\dots \sum_{i_p=0}^{n_p}\begin{pmatrix} n_1\\ i_1\end{pmatrix}\dots \begin{pmatrix} n_p\\ i_p\end{pmatrix}\bar{x}^{i_1,\dots ,i_p}[e,\bar{x}]_{n_1-i_1,\dots ,n_p-i_p}.$$ \hfill \qed
\end{lemma}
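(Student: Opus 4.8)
The plan is to reduce everything to the single-variable identity
$$ex^{n}=\sum_{i=0}^{n}\binom{n}{i}x^{i}[e,x]_{n-i}$$
and then to induct on the number $p$ of variables. So the first task is to establish this one-variable case. The cleanest route is operator-theoretic: write $L_x$ and $R_x$ for left and right multiplication by $x$ on $R$, and set $\delta_x=R_x-L_x$, so that $\delta_x^{k}(e)=[e,x]_{k}$ and $ex^{n}=R_x^{n}(e)$. Left and right multiplications always commute, hence so do $L_x$ and $\delta_x$; expanding $R_x^{n}=(L_x+\delta_x)^{n}$ by the binomial theorem and evaluating at $e$ gives the stated identity. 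If one prefers to avoid operators, the same identity follows by a direct induction on $n$, using the relation $[e,x]_{k}x=x[e,x]_{k}+[e,x]_{k+1}$ together with Pascal's rule to recombine the binomial coefficients.

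For the inductive step I would assume the lemma for $p-1$ variables. First use the one-variable case to push $e$ past $x_1^{n_1}$, then multiply on the right by $x_2^{n_2}\cdots x_p^{n_p}$, obtaining
$$e\bar{x}^{n_1,\dots,n_p}=\sum_{i_1=0}^{n_1}\binom{n_1}{i_1}x_1^{i_1}\Bigl([e,x_1]_{n_1-i_1}\,x_2^{n_2}\cdots x_p^{n_p}\Bigr).$$
For each fixed $i_1$, the parenthesized factor has exactly the form of the left-hand side of the lemma, with $e$ replaced by $e'=[e,x_1]_{n_1-i_1}$ and with the $p-1$ variables $x_2,\dots,x_p$, so the inductive hypothesis applies to it. One then checks directly from the definition of the iterated commutator that $[e',\bar{x}']_{n_2-i_2,\dots,n_p-i_p}=[e,\bar{x}]_{n_1-i_1,\dots,n_p-i_p}$, and that the monomial produced, $x_1^{i_1}\cdot x_2^{i_2}\cdots x_p^{i_p}$, is already $\bar{x}^{i_1,\dots,i_p}$ in the right order; collecting the binomial coefficients yields the claimed formula.

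The entire argument is bookkeeping, and the only place one can actually slip is precisely that bookkeeping: since $e$ is moved rightward one variable at a time, each commutator $[e,x_1]_{n_1-i_1}$ subsequently encounters only the later variables $x_2,\dots,x_p$, always on its right, which is exactly what guarantees both that the surviving powers $x_1^{i_1},\dots,x_p^{i_p}$ emerge in increasing order of index and that the nested bracket $[e,\bar{x}]_{n_1-i_1,\dots,n_p-i_p}$ reassembles intact. Accordingly I expect the main (and essentially only) obstacle to be organizing the induction so that these ordering points are transparent rather than hidden inside the indices.
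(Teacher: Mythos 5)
Your proof is correct. Note that the paper itself gives no argument for this lemma: it is quoted verbatim from [4] and marked \qed{} with the proof omitted, so there is no in-paper approach to compare against. Your two-stage argument is a complete and valid substitute. The one-variable identity via $R_x=(L_x+\delta_x)^n$ is sound: $L_x$ and $R_x$ commute by associativity, hence $L_x$ commutes with $\delta_x=R_x-L_x$, the binomial theorem applies, and $L_x^{i}\delta_x^{n-i}(e)=x^{i}[e,x]_{n-i}$ as needed (this works in a ring without identity, reading the $i=0$ term as $[e,x]_n$). The inductive step is also right, and the two bookkeeping points you flag do hold: $[\,[e,x_1]_{n_1-i_1},x_2,\dots,x_p]_{n_2-i_2,\dots,n_p-i_p}=[e,\bar{x}]_{n_1-i_1,\dots,n_p-i_p}$ is literally the definition of the nested commutator, and the surviving powers emerge as $x_1^{i_1}x_2^{i_2}\cdots x_p^{i_p}$ in the correct order because the inductive hypothesis for $p-1$ variables places $x_2^{i_2}\cdots x_p^{i_p}$ entirely to the left of the bracket, and you then prepend $x_1^{i_1}$. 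Collecting the binomial coefficients finishes the proof; I see no gap.
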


\begin{lemma}
\textnormal{[4, Lemma 5]} Let $e,x_1,\dots ,x_p$ be elements of a ring $R$ with $e^2=e$. Then for any non-negative integers $k_1,\dots ,k_p$, we have $$[e,\bar{x}]_{k_1,\dots ,k_p}=\sum_{i_1=0}^{k_1}\dots \sum_{i_p=0}^{k_p}r_{i_1,\dots ,i_p}e[e,\bar{x}]_{i_1,\dots ,i_p}$$ for some $r_{i_1,\dots ,i_p}\in R$. \hfill \qed
\end{lemma}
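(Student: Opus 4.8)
The plan is to prove the identity by strong induction on the total degree $k_1+\cdots+k_p$, peeling off the outermost commutator at each stage and taming the single troublesome term by invoking the inductive hypothesis a second time. Two elementary observations organize everything. First, idempotency is needed only to start the induction: when $k_1=\cdots=k_p=0$ the left-hand side is $e$, while the only admissible term on the right is $e[e,\bar{x}]_{0,\dots,0}=e\cdot e=e$, so $e^2=e$ lets us choose the coefficient $r_{0,\dots,0}=e\in R$. Second, the asserted form is closed under left multiplication by $R$: if $z=\sum_{\vec{m}}t_{\vec{m}}\,e[e,\bar{x}]_{\vec{m}}$ with every $t_{\vec{m}}\in R$, then $cz=\sum_{\vec{m}}(c\,t_{\vec{m}})\,e[e,\bar{x}]_{\vec{m}}$ for any $c\in R$, again of the asserted form. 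This absorption is exactly what will neutralize the obstruction below.

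For the inductive step, suppose the total degree $d=k_1+\cdots+k_p$ is at least $1$ and let $q$ be the largest index with $k_q\geq 1$; write $\delta$ for the derivation $[\,\cdot\,,x_q]$, which satisfies the Leibniz rule $\delta(ab)=\delta(a)b+a\,\delta(b)$. By maximality of $q$ the multi-index $(k_1,\dots,k_q-1,\dots,k_p)$ has no support beyond position $q$, so the outermost commutator peels off cleanly as $[e,\bar{x}]_{k_1,\dots,k_q,\dots,k_p}=\delta\!\big([e,\bar{x}]_{k_1,\dots,k_q-1,\dots,k_p}\big)$, and the inner factor has total degree $d-1$. By the inductive hypothesis it equals $\sum_{\vec{j}}r_{\vec{j}}\,e[e,\bar{x}]_{\vec{j}}$, and I would distribute $\delta$ over each product of three factors $r_{\vec{j}}\cdot e\cdot[e,\bar{x}]_{\vec{j}}$, obtaining three kinds of terms. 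When $\delta$ falls on $r_{\vec{j}}$ we get $[r_{\vec{j}},x_q]\,e[e,\bar{x}]_{\vec{j}}$, which is of the asserted form since $[r_{\vec{j}},x_q]\in R$. When $\delta$ falls on the trailing commutator we get $r_{\vec{j}}\,e[e,\bar{x}]_{\vec{j}+\vec{e}_q}$, which is again of the asserted form because $\delta([e,\bar{x}]_{\vec{j}})=[e,\bar{x}]_{\vec{j}+\vec{e}_q}$ (legitimate as $\vec{j}$ has no support beyond $q$) and the new multi-index still does not exceed $(k_1,\dots,k_p)$.

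The sole obstruction is the middle term, where $\delta$ lands on $e$, namely $r_{\vec{j}}\,[e,x_q]\,[e,\bar{x}]_{\vec{j}}$: the factor $[e,x_q]$ sitting to the left of $e$ spoils the required shape, and this is the crux of the argument. The key is that $[e,\bar{x}]_{\vec{j}}$ has total degree at most $d-1$, so the inductive hypothesis applies to it as well and rewrites it as $\sum_{\vec{m}}t_{\vec{m}}\,e[e,\bar{x}]_{\vec{m}}$; left multiplication by $r_{\vec{j}}[e,x_q]\in R$ together with the absorption observation then restores the asserted form, with no further use of idempotency. Collecting the three contributions completes the induction. I anticipate that the only genuine care needed is bookkeeping: checking that every multi-index produced stays coordinatewise below $(k_1,\dots,k_p)$, and that each coefficient lies in $R$ rather than merely in $R^{*}$. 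Both are immediate once the recursion is set up, since all coefficients are built from the $r$'s, the $t$'s, and the commutators $[e,x_q]$, all of which belong to $R$.
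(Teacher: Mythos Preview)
Your argument is correct. The paper does not actually prove this lemma: it is quoted verbatim as \textnormal{[4, Lemma 5]} and marked with a bare \qed, so there is no in-paper proof to compare against. Your strong induction on $k_1+\cdots+k_p$, peeling off the outermost nontrivial commutator $[\,\cdot\,,x_q]$ and using the Leibniz rule on the triple product $r_{\vec{j}}\cdot e\cdot[e,\bar{x}]_{\vec{j}}$, is exactly the natural route; the key maneuver---reapplying the inductive hypothesis to the trailing factor $[e,\bar{x}]_{\vec{j}}$ to absorb the rogue $[e,x_q]$ into a left $R$-coefficient---is the standard trick and is carried out cleanly here. Your bookkeeping is also in order: the choice of $q$ as the \emph{largest} index with $k_q\ge 1$ is precisely what guarantees $j_{q+1}=\cdots=j_p=0$ in every $\vec{j}$ produced by the first application of the hypothesis, so that $\delta([e,\bar{x}]_{\vec{j}})=[e,\bar{x}]_{\vec{j}+\vec{e}_q}$ is legitimate and the resulting multi-indices stay coordinatewise below $(k_1,\dots,k_p)$. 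The observation that idempotency enters only at the base case (via $e=e\cdot e=r_{0,\dots,0}\,e[e,\bar{x}]_{0,\dots,0}$ with $r_{0,\dots,0}=e\in R$, avoiding any need for a unit) is a nice touch and worth keeping.
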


An easy application of Lemmas 9 and 10 yields the following fact:

\begin{lemma}
Suppose $e,x_1,\dots ,x_n$ are elements of a ring. Then $e\bar{x}^{i_1,\dots ,i_n}$ can be written as a sum of terms each ending in $e[e,\bar{x}]_{k_1,\dots ,k_n}$ where $0\leq k_j\leq i_j$ and $1\leq j\leq n$. \hfill \qed
\end{lemma}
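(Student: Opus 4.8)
The plan is to prove Lemma~12 by combining Lemmas~9, 10, and 11 in a single induction, arguing entirely "from the right." Fix an idempotent $e$ and elements $x_1,\dots,x_n$. First I would apply Lemma~9 to $e\bar{x}^{i_1,\dots,i_n}$, which expands the product as a $\BZ$-linear combination of terms of the shape $\bar{x}^{j_1,\dots,j_n}[e,\bar{x}]_{i_1-j_1,\dots,i_n-j_n}$, with $0\le j_q\le i_q$. This already isolates the "commutator tail" $[e,\bar{x}]_{k_1,\dots,k_n}$ (with $k_q=i_q-j_q$) on the right; the obstruction is that the monomial prefix $\bar{x}^{j_1,\dots,j_n}$ does not obviously end in $e$, so the desired conclusion — that each summand ends in $e[e,\bar{x}]_{\ell_1,\dots,\ell_n}$ — is not yet visible.

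To fix this, I would next invoke Lemma~10 on the tail: since $e^2=e$, we may rewrite $[e,\bar{x}]_{k_1,\dots,k_n}=\sum_{\ell_1\le k_1,\dots,\ell_n\le k_n} r_{\ell_1,\dots,\ell_n}\,e[e,\bar{x}]_{\ell_1,\dots,\ell_n}$ for suitable ring coefficients $r_{\ell_1,\dots,\ell_n}$. Substituting this back, every term of $e\bar{x}^{i_1,\dots,i_n}$ becomes a sum of terms of the form $\bar{x}^{j_1,\dots,j_n}\,r_{\ell_1,\dots,\ell_n}\,e[e,\bar{x}]_{\ell_1,\dots,\ell_n}$ with $0\le \ell_q\le i_q-j_q\le i_q$, which indeed ends in $e[e,\bar{x}]_{\ell_1,\dots,\ell_n}$ with each $\ell_q$ in the required range $0\le\ell_q\le i_q$. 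This completes the argument in one stroke, with no induction actually needed — the two cited lemmas do all the work; the only point requiring care is tracking that the index bounds $\ell_q\le i_q-j_q$ propagate correctly through the substitution and still satisfy $\ell_q\le i_q$.

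The main (and only) subtlety I anticipate is bookkeeping of the multi-indices: one must be careful that the inner summation ranges in Lemma~10 depend on the outer indices $k_q=i_q-j_q$ produced by Lemma~9, so the composite sum is genuinely triangular in all $n$ coordinates. There is no genuine obstacle here; the "easy application" remark in the excerpt is accurate, and the proof is a two-line composition of the preceding lemmas once the indices are set up. I would write it out as: apply Lemma~9, then apply Lemma~10 termwise to the resulting commutator factors, and read off the conclusion.
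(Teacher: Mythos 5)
Your proposal is correct and is exactly the argument the paper intends: the paper gives no written proof, describing the lemma only as ``an easy application of Lemmas 9 and 10,'' and your two-step composition (expand $e\bar{x}^{i_1,\dots,i_n}$ via Lemma~9, then rewrite each commutator tail $[e,\bar{x}]_{k_1,\dots,k_n}$ via Lemma~10 so that an $e$ appears in front) is precisely that application, with the index bookkeeping handled correctly. Note that you rightly assume $e$ is idempotent, a hypothesis Lemma~10 requires but which the paper's statement of this lemma omits; since the lemma is only ever applied to an idempotent, this is a harmless gap in the paper's phrasing rather than in your argument.
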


Finally, the following two lemmas are the technical heart of the proof of Theorem 4. 

\par Let $V$ be a $K$-vector space. Then, denote by $\End_{K}(V)$ the $K$-algebra of all linear transformation of $V$.

\begin{lemma}
Let $N$ be a subalgebra of $\End_K(V)$. Let $a, x_1,\dots ,x_n\in \End_K(V)$. Let $i_1,\dots ,i_n, k$ be non-negative integers. First, define the following sets:
\par (i) Let $A$ be the set of all $[a,x_j]_{i}$ for $1\leq j\leq n$ and $0\leq i\leq k$.
\par (ii) Suppose that we can write any $[x_1,x_j]_{w_{1}^{(1)}}\dots [x_1,x_j]_{w_{i_1}^{(1)}}\dots [x_n,x_j]_{w_{i_n}^{(n)}}$ in the form $\sum_{i_1',\dots ,i_n'=0}^{i_1,\dots ,i_n} \bar{x}^{i_1',\dots ,i_n'}b_{i_1',\dots ,i_n'}$ for any $1\leq j\leq n$ and for some $b_{i_1',\dots ,i_n'}\in End_K(V)$. 
Let $B$ be the set of the $b_{i_1',\dots ,i_n'}$ that arise in this way for $0\leq w_{s}^{(t)}\leq k$ for all $s,t$. 
\par (iii) Let $C$ be the set of all elements of form $\beta \alpha$ where $\alpha \in A$ and $\beta \in B$. 
\par Suppose that $C\subseteq N$. Then $[\bar{x}^{i_1,\dots ,i_n}a,x_{j}]_{k}$ can be written in the form $$\sum_{i'_1,\dots ,i'_n = 0}^{i_1, \dots ,i_n} \bar{x}^{i_1',\dots ,i_n'}c_{i'_1,\dots ,i'_n}$$ for some $c_{i'_1,\dots ,i'_n}\in N$ for all $1\leq j\leq n$.
\end{lemma}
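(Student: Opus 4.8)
The plan is to expand the iterated commutator $[\bar{x}^{i_1,\dots,i_n}a,x_j]_k$ using the combinatorial lemmas established above, collect all resulting terms into the desired normal form $\sum \bar{x}^{i_1',\dots,i_n'}c_{i_1',\dots,i_n'}$, and then verify that each coefficient $c_{i_1',\dots,i_n'}$ lies in $N$ by identifying it as a sum of products $\beta\alpha$ with $\beta\in B$ and $\alpha\in A$, hence in $C\subseteq N$.

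First I would apply Lemma 6 with $a$ replaced by $\bar{x}^{i_1,\dots,i_n}$, $b$ replaced by $a$, and $c$ replaced by $x_j$, writing
\begin{align*}
[\bar{x}^{i_1,\dots,i_n}a,x_j]_k = \sum_{i=0}^k D_i\,[\bar{x}^{i_1,\dots,i_n},x_j]_i\,[a,x_j]_{k-i}
\end{align*}
for integers $D_i$. Each factor $[a,x_j]_{k-i}$ is, by definition, an element of the set $A$ (since $0\leq k-i\leq k$). For the other factor I would apply Lemma 8 with $b=x_j$ and $s=i\leq k$, expressing $[\bar{x}^{i_1,\dots,i_n},x_j]_i$ as an integer combination of products of the form $[x_1,x_j]_{w_1^{(1)}}\cdots [x_1,x_j]_{w_{i_1}^{(1)}}\cdots [x_n,x_j]_{w_1^{(n)}}\cdots [x_n,x_j]_{w_{i_n}^{(n)}}$ with all $w_s^{(t)}\leq i\leq k$. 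By hypothesis (ii), each such product can be rewritten as $\sum_{i_1',\dots,i_n'=0}^{i_1,\dots,i_n}\bar{x}^{i_1',\dots,i_n'}b_{i_1',\dots,i_n'}$ with $b_{i_1',\dots,i_n'}\in B$. Substituting everything back in and expanding, the whole expression becomes a sum of terms of the shape $(\text{integer})\cdot\bar{x}^{i_1',\dots,i_n'}\,b_{i_1',\dots,i_n'}\,[a,x_j]_{k-i}$; pulling the scalar and the factor $\bar{x}^{i_1',\dots,i_n'}$ to the front leaves behind $b_{i_1',\dots,i_n'}[a,x_j]_{k-i} = \beta\alpha$ with $\beta\in B$, $\alpha\in A$, which is an element of $C$. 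Collecting all terms with the same multi-index $(i_1',\dots,i_n')$ defines $c_{i_1',\dots,i_n'}$ as an integer combination of elements of $C\subseteq N$, hence $c_{i_1',\dots,i_n'}\in N$ (using that $N$ is a subalgebra, so closed under addition and integer multiples).

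The main obstacle I anticipate is purely bookkeeping: one must be careful that the multi-index ranges match up, i.e. that after applying Lemma 8 and then hypothesis (ii) the exponents $i_1',\dots,i_n'$ produced genuinely satisfy $0\leq i_q'\leq i_q$, and that the bound $w_s^{(t)}\leq k$ needed to invoke clause (ii) of the hypothesis is respected throughout — this is where the restriction $i\leq k$ in the Lemma 6 expansion is essential. A secondary point to handle cleanly is the order of factors: since we are in a noncommutative ring, the $\bar{x}^{i_1',\dots,i_n'}$ coming out of hypothesis (ii) sits to the left of $b_{i_1',\dots,i_n'}$ and the subsequent factor $[a,x_j]_{k-i}$ stays to the right of $b_{i_1',\dots,i_n'}$, so the product $b_{i_1',\dots,i_n'}[a,x_j]_{k-i}$ is already in the correct order to be recognized as $\beta\alpha\in C$; no further commutation is needed. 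Once these index-tracking details are in place the argument closes immediately.
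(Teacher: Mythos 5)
Your proposal is correct and follows essentially the same route as the paper's own proof: expand via Lemma 6, apply Lemma 8 to $[\bar{x}^{i_1,\dots,i_n},x_j]_i$, invoke hypothesis (ii) to reach the normal form, and recognize each $b_{i_1',\dots,i_n'}[a,x_j]_{k-i}$ as an element of $C\subseteq N$, with integer combinations absorbed by $N$. The bookkeeping points you flag (the bound $w_s^{(t)}\leq i\leq k$ and the order of the factors $\beta\alpha$) are exactly the ones the paper handles implicitly.
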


\begin{proof}
By applying Lemmas 6 and 8, we obtain

\begin{align*}
[\bar{x}^{i_1,\dots ,i_n}a,x_{j}]_{k} 
& =\sum_{i=0}^k D_i [\bar{x}^{i_1,\dots ,i_n},x_j]_i [a,x_j]_{k-i}\\
& =\sum_{i=0}^k \sum_{w_{1}^{(1)},\dots ,w_{i_1}^{(1)}=0}^i \dots \sum_{w_{1}^{(n)},\dots ,w_{i_n}^{(n)}=0}^i D_iE_{w_{1}^{(1)},\dots ,w_{i_1}^{(1)}}^{(1)}\dots E_{w_{1}^{(n)},\dots , w_{i_n}^{(n)}}^{(n)}\\ &\quad \quad [x_1,x_j]_{w_{1}^{(1)}}\dots [x_1,x_j]_{w_{i_1}^{(1)}}\dots [x_n,x_j]_{w_{i_n}^{(n)}}[a,x_j]_{k-i}.
\end{align*}
for some $D_i, E_{w_{1}^{(1)},\dots ,w_{i_1}^{(1)}}^{(1)}\dots E_{w_{1}^{(n)},\dots , w_{i_n}^{(n)}}^{(n)}\in \mathbb{Z}$.

A single term of this sum is of form

\begin{align*}
&D_iE_{w_{1}^{(1)},\dots ,w_{i_1}^{(1)}}^{(1)}\dots E_{w_{1}^{(n)},\dots , w_{i_n}^{(n)}}^{(n)} [x_1,x_j]_{w_{1}^{(1)}}\dots [x_1,x_j]_{w_{i_1}^{(1)}}\dots [x_n,x_j]_{w_{i_n}^{(n)}}[a,x_j]_{k-i}\\
&= \sum_{i_1',\dots ,i_n'=0}^{i_1,\dots ,i_n}D_iE_{w_{1}^{(1)},\dots ,w_{i_1}^{(1)}}^{(1)}\dots E_{w_{1}^{(n)},\dots , w_{i_n}^{(n)}}^{(n)} \bar{x}^{i_1',\dots ,i_n'}b_{i_1',\dots ,i_n'}[a,x_j]_{k-i}\\
& =\sum_{i_1',\dots ,i_n'=0}^{i_1,\dots ,i_n}D_iE_{w_{1}^{(1)},\dots ,w_{i_1}^{(1)}}^{(1)}\dots E_{w_{1}^{(n)},\dots , w_{i_n}^{(n)}}^{(n)} \bar{x}^{i_1',\dots ,i_n'}c_{i_1',\dots ,i_n'}.
\end{align*}
for some $b_{i_1',\dots ,i_n'}\in B$ and some $c_{i_1',\dots ,i_n'}\in C\subseteq N$.
Since for any $c\in N$, we have that $zc\in N$ for all $z\in \BZ$, this concludes. 
\end{proof}

\begin{lemma}
Let $N$ be a locally nilpotent subalgebra of $\End_K(V)$. Let $a_{i_1,\dots ,i_n}, x_1,\dots ,x_n\in \End_K(V)$. Suppose $e=\sum_{i_1,\dots ,i_n=0}^{m_1,\dots ,m_n}\bar{x}^{i_1,\dots ,i_n}a_{i_1,\dots ,i_n}$ is an idempotent. Define the following sets:
\par (i) Consider the set of all $[a_{i_1,\dots ,i_n},x_j]_i$ for all $1\leq j\leq n$, $0\leq i\leq \max_s\{m_s\}$, and $0\leq i_r\leq m_r$. Call this set $A_1$. For any $a_{i_1,\dots ,i_n}$, suppose that we may write $[\bar{x}^{i_1,\dots ,i_n}a_{i_1,\dots ,i_n},x_1]_{k_1}$ as $\sum_{i'_1,\dots ,i'_n = 0}^{i_1, \dots ,i_n} \bar{x}^{i_1',\dots ,i_n'}c_{i'_1,\dots ,i'_n}$ for some $c_{i'_1,\dots ,i'_n}\in N$ for all $0\leq k_1\leq m_1$. Let the set of all $[c_{i_1',\dots i_n'},x_j]_i$ for all $1\leq j\leq n$, $0\leq i\leq \max_s\{m_s\}$, and $0\leq i_r'\leq m_r$ be called $A_2$. In this way, inductively define $A_1,\dots ,A_n$. Let $A=\bigcup_{i=0}^n A_i$. 
\par (ii) Suppose that any
$[x_1,x_j]_{w_1^{(1)}}\cdots[x_1,x_j]_{w_{i_1}^{(n)}}\cdots[x_n,x_j]_{w_{i_n}^{(n)}}$ can be written in the form $\sum_{i_1',\dots ,i_n'=0}^{i_1,\dots ,i_n} \bar{x}^{i_1',\dots ,i_n'}b_{i_1',\dots ,i_n'}$ 
for $1\leq j\leq n$ and for some $b_{i_1',\dots ,i_n'}\in End_K(V)$.
Let $B$ be the set of the $b_{i_1',\dots ,i_n'}$ that arise in this way for $0\leq w_{s}^{(t)}\leq \max_j\{m_j\}$ for all $s,t$. 
\par (iii) Let $C$ be the set of all elements of form $\beta\alpha$ where $\alpha\in A$ and $\beta\in B$. 
\par Suppose $C\subseteq N$. Then $e=0$. 

\end{lemma}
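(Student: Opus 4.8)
The plan is to use the relation $e=e^2$ to produce, for every $r$, an expression of $e$ as a finite sum of terms each of which ends in a ``tail'' that is a product of at least $r$ elements drawn from one fixed finitely generated subalgebra of $N$. Since $N$ is locally nilpotent that subalgebra is nilpotent, so once $r$ exceeds its nilpotency index all the tails — hence $e$ itself — vanish. Concretely, since all the index ranges defining $A_1,\dots,A_n$, $B$ and $C$ are finite, $C$ is a finite set; I would let $N_0$ be the subalgebra of $N$ it generates, so $N_0$ is nilpotent, say $N_0^t=0$. Taking every commutator order in (ii) equal to $0$ turns the product there into the monomial $\bar{x}^{i_1,\dots,i_n}$, so $b_{i_1,\dots,i_n}$ may be taken to be the identity of $\End_K(V)$; thus $1\in B$, hence $A\subseteq C\subseteq N_0$, and in particular each coefficient $a_{i_1,\dots,i_n}=[a_{i_1,\dots,i_n},x_1]_0\in A_1$ lies in $N_0$.

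The first real step is a normal form for iterated commutators of $e$: for all $0\le k_r\le m_r$,
\[
[e,\bar{x}]_{k_1,\dots,k_n}=\sum_{i_1',\dots,i_n'=0}^{m_1,\dots,m_n}\bar{x}^{i_1',\dots,i_n'}\gamma_{i_1',\dots,i_n'},\qquad\gamma_{i_1',\dots,i_n'}\in N_0.
\]
To get this I would substitute $e=\sum\bar{x}^{i_1,\dots,i_n}a_{i_1,\dots,i_n}$ into $[e,x_1]_{k_1}$ and apply Lemma 13 with $a=a_{i_1,\dots,i_n}$, $x_j=x_1$, $k=k_1$: by (ii) and (iii) the sets $B$ and $C$ occurring in Lemma 13 sit inside the corresponding sets of the present statement, so $C\subseteq N$ and Lemma 13 rewrites $[e,x_1]_{k_1}$ as $\sum\bar{x}^{i_1',\dots,i_n'}c_{i_1',\dots,i_n'}$ with the $c_{i_1',\dots,i_n'}$ $\mathbb{Z}$-combinations of elements of $C$ and all $\bar{x}$-degrees $\le m_1,\dots,m_n$ (this also justifies the expression posited in (i)). Then $[[e,x_1]_{k_1},x_2]_{k_2}=\sum[\bar{x}^{i_1',\dots,i_n'}c_{i_1',\dots,i_n'},x_2]_{k_2}$, and Lemma 13 applies to each summand because the commutators $[c_{i_1',\dots,i_n'},x_l]_i$ form the set $A_2\subseteq A$, so the relevant $C$-set is again inside $BA=C\subseteq N$; the coefficients stay in $N_0$ and the degrees stay $\le m_1,\dots,m_n$. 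Peeling off $x_3,\dots,x_n$ in turn — this is exactly what the inductive definition of $A_1,\dots,A_n$ is for — yields the displayed identity.

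Now the idempotent loop. Write $e=e^2=\sum_{i_1,\dots,i_n}(e\,\bar{x}^{i_1,\dots,i_n})\,a_{i_1,\dots,i_n}$ and apply Lemma 11 to each $e\,\bar{x}^{i_1,\dots,i_n}$, so that $e$ is a finite sum of terms of the shape $(\text{prefix})\cdot e[e,\bar{x}]_{k_1,\dots,k_n}\cdot a_{i_1,\dots,i_n}$ with $k_r\le i_r\le m_r$ and tail $a_{i_1,\dots,i_n}\in N_0$. I would then iterate: starting from any expression of $e$ as a finite sum of terms $(\text{prefix})\cdot e[e,\bar{x}]_{k_1,\dots,k_n}\cdot w$ with all $k_r\le m_r$ and $w$ a product of elements of $N_0$, use the normal form to write $e[e,\bar{x}]_{k_1,\dots,k_n}\,w=\sum_{i_1',\dots,i_n'}(e\,\bar{x}^{i_1',\dots,i_n'})\,(\gamma_{i_1',\dots,i_n'}\,w)$ with $i_r'\le m_r$ and $\gamma_{i_1',\dots,i_n'}\,w$ a longer product of elements of $N_0$, and then apply Lemma 11 once more to each $e\,\bar{x}^{i_1',\dots,i_n'}$ to move a factor $e[e,\bar{x}]_{k_1'',\dots,k_n''}$ with $k_r''\le i_r'\le m_r$ back to the right. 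Each pass lengthens every tail by at least one factor of $N_0$, so after at most $t$ passes every tail lies in $N_0^t=0$ and therefore $e=0$.

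The hard part will be the normal form step: one must check that the hypotheses of Lemma 13 remain valid at each of the $n$ stages in which $x_1,\dots,x_n$ are peeled off, which is precisely what the inductively defined sets $A_1,\dots,A_n$ in (i) (together with (ii) and (iii)) are engineered to guarantee. One must also be careful to verify that the $\bar{x}$-degrees never exceed $m_1,\dots,m_n$ and that at every stage only finitely many coefficients arise, all of them inside the fixed nilpotent algebra $N_0$, so that every sum produced by the loop is finite and the process genuinely terminates.
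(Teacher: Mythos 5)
Your proof is correct, and it rests on the same two pillars as the paper's argument: the normal-form claim that $[e,\bar{x}]_{k_1,\dots ,k_n}=\sum_{i_1',\dots ,i_n'=0}^{m_1,\dots ,m_n}\bar{x}^{i_1',\dots ,i_n'}\gamma_{i_1',\dots ,i_n'}$ with coefficients in the nilpotent (because finitely generated) subalgebra generated by $C$ --- which you prove exactly as the paper does, by peeling off $x_1,\dots ,x_n$ one at a time using condition (i) and the rewriting lemma for $[\bar{x}^{i_1,\dots ,i_n}a,x_j]_k$ (note that the result you cite as ``Lemma 13'' in that step is the paper's Lemma 12) --- together with Lemma 11. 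Where you genuinely diverge is the endgame. The paper converts nilpotency of $S=\langle C\rangle$ into a flag $0=V_0\subseteq \dots \subseteq V_h=V$ with $S(V_i)=V_{i-1}$ and proves by induction on $l$ that $e[e,\bar{x}]_{k_1,\dots ,k_n}(V_l)=0$; taking $l=h$ and $k_1=\dots =k_n=0$ gives $e=e^2=0$. You instead stay entirely at the level of operator identities: each pass of ``normal form, then Lemma 11'' appends one more factor from $N_0$ to the right end of every term, so after $t$ passes (where $N_0^t=0$) every term vanishes. The two inductions are mirror images of one another --- your pass count plays the role of the paper's filtration level --- but your version has the mild advantage of never invoking the vector space $V$ or the triangularizing flag, only the finiteness of each sum and the uniform degree bounds $i_r'\leq m_r$, which you correctly flag as the points needing care. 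One simplification: you do not actually need $1\in B$ or $a_{i_1,\dots ,i_n}\in N_0$, since after $t$ passes the accumulated prefix $\gamma^{(t)}\cdots \gamma^{(1)}$ of each tail already lies in $N_0^t=0$ regardless of the rightmost factor.
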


\begin{proof}
First, we remark that by Lemma 12, our assumption (i) is a valid hypothesis. Let $S$ be the subalgebra of $N$ generated by $C$. Then $S$ is nilpotent, so there exists subspaces $0=V_0\subseteq V_1\subseteq \dots \subseteq V_h=V$ such that $S(V_i)=V_{i-1}$. We claim that for any $0\leq l\leq h$ we have $e[e,\bar{x}]_{k_1,\dots ,k_n}(V_l)=0$ for all $0\leq k_j\leq m_j$ and $1\leq j\leq n$.
\par We induct on $l$. When $l=0$, the statement is clear. Before proceeding with the induction, we make the following intermediary assertion:\\

\textbf{Claim.} The element $[e,\bar{x}]_{k_1,\dots ,k_n}$ can be written in the form $$\sum_{i_1,\dots ,i_n = 0}^{m_1, \dots ,m_n} \bar{x}^{i_1,\dots ,i_n}c_{i_1,\dots ,i_n}$$ for some $c_{i_1,\dots ,i_n}\in S$.

\begin{proof} For this claim, we perform a nested induction on $n$. When $n=1$, we have 
\begin{align*}
[e,x_1]_{k_1}=\sum_{i_1,\dots ,i_n=0}^{m_1,\dots ,m_n}[\bar{x}^{i_1,\dots ,i_n}a_{i_1,\dots ,i_n}, x_1]_{k_1}.
\end{align*}
By condition (i) and the fact that $C$ generates $S$, this concludes the basis. For the inductive step, observe that 

\begin{align*}
[e,\bar{x}]_{k_1,\dots ,k_n}=\sum_{i_1,\dots ,i_n=0}^{m_1,\dots ,m_n} [[\bar{x}^{i_1,\dots ,i_n}a_{i_1,\dots ,i_n},\bar{x}]_{k_1,\dots ,k_{n-1}},x_n]_{k_n}.
\end{align*}
Applying the inductive hypothesis, this is 

\begin{align*}
\sum_{i_1,\dots ,i_n=0}^{m_1,\dots ,m_n} \sum_{i_1',\dots ,i_n'=0}^{i_1,\dots ,i_n}[\bar{x}^{i_1',\dots ,i_n'}c_{i'_1,\dots ,i'_n},x_n]_{k_n}.
\end{align*}
for some $c_{i'_1,\dots ,i'_n}\in S$.
Applying condition (i) to $[\bar{x}^{i_1',\dots ,i_n'}c_{i'_1,\dots ,i'_n},x_n]_{k_n}$, this proves our intermediary claim. 
\end{proof}

Now we proceed with the outer induction. Let $v\in V_l$. Then \begin{align*}
    e[e,\bar{x}]_{k_1,\dots ,k_n}(v)
    &=\sum_{i_1,\dots ,i_n = 0}^{m_1,\dots ,m_n} e\bar{x}^{i_1,\dots ,i_n}c_{i_1,\dots ,i_n}(v)\\
    &=\sum_{i_1,\dots ,i_n = 0}^{m_1,\dots ,m_n} e\bar{x}^{i_1,\dots ,i_n}(u_{i_1,\dots, i_n})
\end{align*}
for $u_{i_1,\dots, i_n}\in V_{l-1}$. By Lemma 11 and the inductive hypothesis, we are done. 
\end{proof}

\noindent \textit{Proof of Theorem 4.} We follow the approach of [2, 4]. Suppose $R[\bar{X}_n,\bar{d}_n]$ as in the theorem is Behrens radical. Then there exists a surjective homomorphism $\varphi$ from $R[\bar{X}_n,\bar{d}_n]$ onto a subdirectly irreducible ring $A$ such that there is a nonzero idempotent in the heart of $A$. Note that $A$ must be a prime ring whose extended centroid $K$ is a field. Let $Q$ be the Martindale right ring of quotients of $A$. 
\par Let $x_i:A\to A$ be maps given by $x_i(\varphi(t)):=\varphi(X_it)$ for all $t\in R[\bar{X}_n,\bar{d}_n]$ where $1\leq  i\leq n$. We claim that the $x_i$ are well-defined. Suppose $\varphi(t)=0$ and $\varphi(X_it)\neq 0$. Since $A$ is prime, there must be $t'\in R[\bar{X}_n,\bar{d}_n]$ such that $\varphi(t')\varphi(X_it)\neq 0$. We also have 
\begin{align*}
    \varphi(t')\varphi(X_it)&=\varphi(t'X_it)\\
    &=\varphi(t'X_i)\varphi(t)\\
    &=0,
\end{align*}
which is a contradiction. Note that the $x_i$ are endomorphisms of right $A$-modules, so all $x_i$ are in $Q$. Let the subring of $Q$ generated by $A$ and the $x_i$ be denoted by $A'$. Let $R'$ be the subring of $R^*[\bar{X}_n,\bar{d}_n]$ generated by $R[\bar{X}_n,\bar{d}_n]$ and $X_{i}^j$ for all $1\leq i\leq n$ and all $0\leq j$. Let $\psi:R'\to A'$ be an additive map such that $\psi(X_{i}^j)=x_{i}^j$ and $\psi(t)=\varphi(t)$ for all $t\in R[\bar{X}_n,\bar{d}_n]$. Note that $\psi$ is a homomorphism extending $\varphi$. We can write a nonzero idempotent $e\in A\subseteq A'$ as 
\begin{align*}
    e&=\varphi\left(\sum_{i_1=0}^{m_1}\dots \sum_{i_n=0}^{m_n} X_{1}^{i_1}\dots X_{n}^{i_n} r_{i_1,\dots ,i_n} \right)\\
    &=\psi\left(\sum_{i_1=0}^{m_1}\dots \sum_{i_n=0}^{m_n} X_{1}^{i_1}\dots X_{n}^{i_n} r_{i_1,\dots ,i_n} \right)\\
    &=\sum_{i_1=0}^{m_1}\dots \sum_{i_n=0}^{m_n} \bar{x}^{i_1,\dots ,i_n} a_{i_1,\dots ,i_n}
\end{align*}
where the $m_j$ are non-negative integers, $r_{i_1,\dots , i_n}\in R$, and $\psi(r_{i_1,\dots i_n})=a_{i_1,\dots ,i_n}$. Let $D$ be the subring of $A'$ generated by all $x_i$ and all $a_{i_1, \dots ,i_n}$. Let $B=D\cap \psi(R)$. Note that $B$ and the subalgebra $BK$ of $Q$ are locally nilpotent. The subalgebra $DK$ of $A'K$ is finitely generated, so it can embedded into $\End_K(V)$ for some $K$-vector space $V$. Then we can assume that $x_i \in \End_K(V)$. Finally, we have that $N=BK$ is locally nilpotent and $e=\sum_{i_1=0}^{m_1}\dots \sum_{i_n=0}^{m_n} \bar{x}^{i_1,\dots ,i_n} a_{i_1,\dots ,i_n}$ is a nonzero idempotent. Applying Lemma 13, we have a contradiction. \hfill \qed

\section{Acknowledgements}

We would like to thank the referee for her or his useful suggestions. Also, we would like to thank Prof. Mikhail Chebotar for his careful assistance and guidance, as well as his kind-hearted encouragement and support. We also extend our gratitude to the Department of Mathematical Sciences at Kent State for virtually hosting the NSF REU under which this research was conducted. The authors are supported in part by NSF grant DMS-1653002.

\end{document}